\newtheorem {theorem}{Theorem}[section]
\newtheorem {lemma}[theorem]{Lemma}
\newtheorem {corollary}[theorem]{Corollary}
\theoremstyle{definition}
\newtheorem{definition}[theorem]{Definition}
\newtheorem {example}[theorem]{Example}
\theoremstyle{remark}
\newtheorem {remark}[theorem]{Remark}
\newcommand{\eqfdd}{\stackrel{\rm f.d.d.}{=}}
\newcommand{\Cov}{\operatorname{Cov}}
\newcommand{\Var}{\operatorname{Var}}
\DeclareMathOperator{\sech}{sech}
\renewcommand{\Im}{\operatorname{Im}}  
\def\ba{\begin{array}}
\def\ea{\end{array}}
\def\bea{\begin{eqnarray} \label}
\def\eea{\end{eqnarray}}
\def\be{\begin{equation} \label}
\def\ee{\end{equation}}
\def\bit{\begin{itemize}}
\def\eit{\end{itemize}}
\def\ben{\begin{enumerate}}
\def\een{\end{enumerate}}
\def\E{\mathbb{E}}
\def\P{\mathbb{P}}
\def\R{\mathbb{R}}
\def\C{\mathbb{C}}
\def\RRd1{\mathbb{R}^{d+1}}
\def\SS{\mathbb{S}}
\def\cF{\mathcal{F}}
\newcommand{\eee}{{\rm e}}
\newcommand{\eps}{\varepsilon}
\newcommand{\dd}{{\rm d}}
\begin{document}

\title[An infinite-dimensional helix]{An infinite-dimensional helix invariant under spherical projections}

\author{Zakhar Kabluchko}
\address{Zakhar Kabluchko: Institut f\"ur Mathematische Stochastik,
Westf\"alische Wilhelms-Universit\"at M\"unster,
Orl\'eans--Ring 10,
48149 M\"unster, Germany}
\email{zakhar.kabluchko@uni-muenster.de}

\date{}

\begin{abstract}
We classify all subsets $S$ of the projective Hilbert space with the following property: for every point $\pm s_0\in S$, the spherical projection of $S\backslash\{\pm s_0\}$ to the hyperplane orthogonal to $\pm s_0$ is isometric to $S\backslash\{\pm s_0\}$.
In probabilistic terms, this means that we characterize all zero-mean Gaussian processes $Z=(Z(t))_{t\in T}$  with the property that for every $s_0\in T$ the conditional distribution of $(Z(t))_{t\in T}$ given that $Z(s_0)=0$ coincides with the distribution of $(\varphi(t; s_0) Z(t))_{t\in T}$ for some function $\varphi(t;s_0)$. A basic example of such process is the stationary zero-mean Gaussian process $(X(t))_{t\in\R}$ with covariance function $\mathbb E [X(s) X(t)] = 1/\cosh (t-s)$. We show that, in general,  the process $Z$ can be decomposed into a union of mutually independent processes of two types: (i) processes of the form $(a(t) X(\psi(t)))_{t\in T}$, with $a: T\to \R$, $\psi(t): T\to \R$, and (ii) certain exceptional Gaussian processes defined on four-point index sets. The above problem is reduced to the classification of metric spaces in which in every triangle the largest side equals the sum of the remaining two sides.
\end{abstract}

\keywords{Gaussian process, curves in Hilbert spaces, spherical projection, metric space, triangle equality,  zeroes, Pfaffian point process, determinantal point process}

\subjclass[2010]{Primary, 60G15; secondary, 60G10, 46C05, 54E35}

\maketitle

\section{Introduction and main results}

\subsection{Introduction}
In the present paper, we shall be interested in the stationary Gaussian process $X = (X(t))_{t\in\R}$ with zero mean and covariance function
\begin{equation}\label{eq:def_covar}
\E [X(s) X(t)] = \frac 1 {\cosh (t-s)}, \qquad s,t\in\R.
\end{equation}
This process appeared in the literature~\cite{peres_virag,matsumoto_shirai,hough_book,dembo_etal,flasche_kabluchko,poplavskyi_schehr,iksanov_kabluchko} mostly in form of various time-changes. To define the time-changed processes, let $\xi_0,\xi_1,\ldots$ be i.i.d.\ standard Gaussian random variables and $(W(u))_{u\geq 0}$ a standard Brownian motion. The \textit{random Taylor series}
$$
f(t):= \sum_{k=0}^\infty \xi_k t^k, \qquad t\in (-1,1),
$$
and the \textit{random Laplace transform}
$$
g(t) := \int_{0}^\infty \eee^{-tu} \dd W(u), \qquad t>0,
$$
are zero-mean Gaussian processes characterized by their covariance functions
$$
\E [f(s) f(t)] = \frac 1 {1-st}
\quad
\text{ and }
\quad
\E [g(s) g(t)] = \frac {1} {s+t}.
$$
By comparing the covariance functions it is easy to check  that both processes are essentially time-changes of $X$, namely
\begin{equation}\label{eq:time_change}
\left(\frac{f(\tanh t)}{\cosh t}\right)_{t\in \R} \eqfdd (X(t))_{t\in\R},
\qquad
\left( \sqrt {2\eee^{2t}}  g(\eee^{2t}) \right)_{t\in \R} \eqfdd (X(t))_{t\in\R},
\end{equation}
where $\eqfdd$ denotes the equality of finite-dimensional distributions.

If $Z=(Z(t))_{t\in T}$ denotes any of the processes $X$, $f$, $g$ introduced above, then the following remarkable property holds:
\begin{quote}
For every $s_0\in T$, the conditional distribution of $(Z(t))_{t\in T}$ given that $Z(s_0)=0$ coincides with the distribution of $(\varphi(t; s_0) Z(t))_{t\in T}$ for a suitable function $\varphi(t;s_0)$.
\end{quote}
So, the law of the conditioned process is the same as the law of the original process up to multiplication by some function.
Specifically, in the case of the process $X$, for every $s_0\in\R$, the law of $(X(t))_{t\in\R}$ conditioned on $X(s_0) = 0$ is the same as the law of the process
$$
\left(\frac{\sinh (t-s_0)}{\cosh (t-s_0)}\, X(t) \right)_{t\in\R}.
$$
Moreover, for every pairwise different $s_1,\ldots, s_d\in\R$, the law of the process $(X(t))_{t\in\R}$ conditioned on $X(s_1)=\ldots=X(s_d) =0$ is the same as the law of
$$
\left(\frac{\sinh (t-s_1)}{\cosh (t-s_1)}\ldots \frac{\sinh (t-s_d)}{\cosh (t-s_d)} \,X(t) \right)_{t\in\R}.
$$

The above property has been first observed by Peres and Vir\'ag~\cite[Proposition~12]{peres_virag} for a modification of $g(t)$ in which the $\xi_k$'s are complex-valued standard normal  and was an important step in their proof that the complex zeroes of this process form a determinantal point process. The same result can be found in~\cite[Proposition~5.1.3]{hough_book}.  For the  process $g(t)$ itself, a similar property was used by~\citet[Lemma~4.2]{matsumoto_shirai} to establish the Pfaffian character of both real and complex zeroes of $g(t)$. Recently, \citet{poplavskyi_schehr} used the Pfaffian character of the zeroes of $X$  to compute the persistence exponent of $X$ and several related processes.

The aim of the present paper is to classify all Gaussian processes having the above property. In the spirit of the work of Kolmogorov~\cite{kolmogorov1,kolmogorov2}, we shall state the problem in purely geometric terms.  Namely, we regard  $(X(t))_{t\in\R}$ as a curve (a ``helix'') in the unit sphere of the Hilbert space $L^2(\Omega,\cF, \P)$, where $(\Omega, \cF, \P)$ is the probability space on which $(X(t))_{t\in\R}$ is defined. Conditioning on $X(s_0)=0$ corresponds to the orthogonal projection onto the hyperplane orthogonal to $X(s_0)$.  Because of the appearance of the function $\varphi(t;s_0)$ in the above property, it is natural to pass to the projective Hilbert space and to replace orthogonal projections by the so-called spherical projections. We are led to the problem of classifying all subsets of the projective Hilbert space that do not change their isometry type under spherical projections.


\subsection{Geometric result}
Let $H$ be a Hilbert space. The unit sphere of $H$ will be denoted by $\SS(H):= \{x\in H \colon \|x\| = 1\}$. The projective (or elliptic space) $\P(H):= \SS(H)/\pm $ is obtained from $\SS(H)$ by identifying the antipodal points $+x$ and $-x$, for all $x\in \SS(H)$. The elements of $\P(H)$ will be denoted by $\pm x, \pm y$, and so on. The projective space is endowed with the geodesic metric
$$
\rho(\pm x,  \pm y) = \arccos |\langle x, y\rangle|.
$$
For every vector $x_0\in \SS(H)$ we denote its orthogonal complement by
$$
x_0^{\bot} = \{x\in H\colon \langle x, x_0\rangle = 0\}.
$$
Let $\P(x_0^\bot)$ be the projective space constructed from the Hilbert space $x_0^\bot$.  Given an element $\pm x_0 \in \P(H)$, we define the \textit{spherical projection} $p_{\pm x_0}: \P(H)\backslash\{\pm x_0\} \to \P(x_0^\bot)$ by
\begin{equation}\label{eq:proj_def}
p_{\pm x_0}(\pm y) := \pm\frac{y-x_0 \langle x_0,y\rangle}{\sqrt{1-\langle x_0,y\rangle^2}}.
\end{equation}
In words, we first orthogonally project $\pm y$ to the hyperplane $x_0^\bot$ and then rescale the result to have unit length. Equivalently, $p_{\pm x_0}(\pm y)$ is the point in the projective space of the hyperplane $x_0^\bot$ minimizing the distance to $\pm y$.  Note that the projection is not defined for $y=x_0$.

\begin{definition}\label{def:main}
We say that a set of points $S\subset \P(H)$ \textit{does not change its isometry type under spherical projections} if for all $\pm s_0 \in S$ and all $\pm x,\pm y\in S\backslash\{\pm s_0\}$ we have
$$
\rho(p_{\pm s_0}(\pm x),  p_{\pm s_0}(\pm y)) = \rho(\pm x,\pm y).
$$
\end{definition}
\begin{remark}
By definition of the metric $d$, the above can be written as
\begin{equation}\label{eq:scalar_prod_preserved}
|\langle p_{\pm s_0}(\pm x),  p_{\pm s_0}(\pm y) \rangle| = |\langle x,y\rangle|.
\end{equation}
\end{remark}
Our aim is to describe all sets $S$ having this property, up to isometry. Let us first consider some examples.

\begin{example}[The helix]\label{ex:helix}
Consider a ``helix'' $\{h(t)\}_{t\in \R}$ in an infinite-dimensional projective Hilbert space $\P(H)$ with the property
$$
|\langle h(s), h(t)\rangle| = \frac{1}{\cosh (t-s)}, \qquad s,t\in\R.
$$
For example, we can take $H:= L^2(\Omega,\cF,\P)$ to be the $L^2$-space of the probability space on which the Gaussian process $(X(t))_{t\in \R}$ with covariance function~\eqref{eq:def_covar} is defined, and then put $h(t) := \pm X(t)\in \P(H)$. Then, the set $S = \{h(t)\}_{t\in \R}$ satisfies the condition from Definition~\ref{def:main}. To see this, observe that for every $s_0, x\in \R$ with $x\neq s_0$ we have
$$
p_{h(s_0)}(h(x))
=
\pm \frac {X(x)- X(s_0)/\cosh (x-s_0)}{\sqrt{1-1/\cosh^2(x-s_0)}}
=
\pm \frac {X(x) \cosh (x-s_0) - X(s_0)}{\sinh (x-s_0)}.
$$
Given this, one easily checks that for every $x\neq s_0$ and $y\neq s_0$,
$$
|\langle p_{h(s_0)}(h(x)),  p_{h(s_0)}(h(y)) \rangle| = \frac 1 {\cosh (x-y)} = |\langle h(x),h(y)\rangle|.
$$
Trivially, any subset of $S$ also satisfies the condition from Definition~\ref{def:main}.
\end{example}

\begin{example}[Orthogonal unions]
If $S_\alpha\subset \P(H)$, $\alpha\in I$,  are mutually \textit{orthogonal} sets such that each $S_\alpha$ satisfies the condition from Definition~\ref{def:main}, then one easily checks that their union $\cup_{\alpha\in I} S_\alpha$ also satisfies this condition. Orthogonality means that $\langle u,v \rangle=0$ for all $\pm u\in S_\alpha$ and $\pm v\in S_\beta$ with $\alpha\neq \beta$.
\end{example}

\begin{example}[A family of exceptional quadruples]\label{ex:four_points_projective}
Let $A,B,C,D$ be four points in the unit sphere $\SS(H)$ with the following scalar products
$$
\langle A,B\rangle = \langle C,D\rangle = \frac 1 {\cosh x},
\quad
\langle A, D\rangle = \langle B,C\rangle = \frac 1 {\cosh y},
\quad
\langle A,C\rangle = \langle B,D\rangle = \frac 1 {\cosh (x-y)}.
$$
Here, $x>0$ and $y>0$ are distinct numbers with the property that the Gram matrix of $A,B,C,D$ is positive semi-definite. The eigenvalues of the Gram matrix are given by
\begin{align*}
\lambda_1 &= 1 + \sech y  +\sech(x - y) + \sech x,\\
\lambda_2 &= 1 + \sech y  -\sech(x - y) - \sech x,\\
\lambda_3 &= 1 - \sech y  +\sech(x - y) - \sech x,\\
\lambda_4 &= 1 - \sech y  -\sech(x - y) + \sech x.
\end{align*}
These formulae can be proved by comparing the characteristic polynomial of the Gram matrix with the polynomial $\prod_{k=1}^4 (\lambda-\lambda_k)$. The Gram matrix is positive semi-definite iff all $\lambda_k$'s are non-negative. We always have $\lambda_1>0$. The set of admissible pairs $(x,y)$, i.e.\ pairs  for which the remaining eigenvalues are non-negative and $x\neq y$, is shown on Figure~\ref{fig}.

\begin{figure}[h]
\begin{center}
\includegraphics[width=0.4\textwidth]{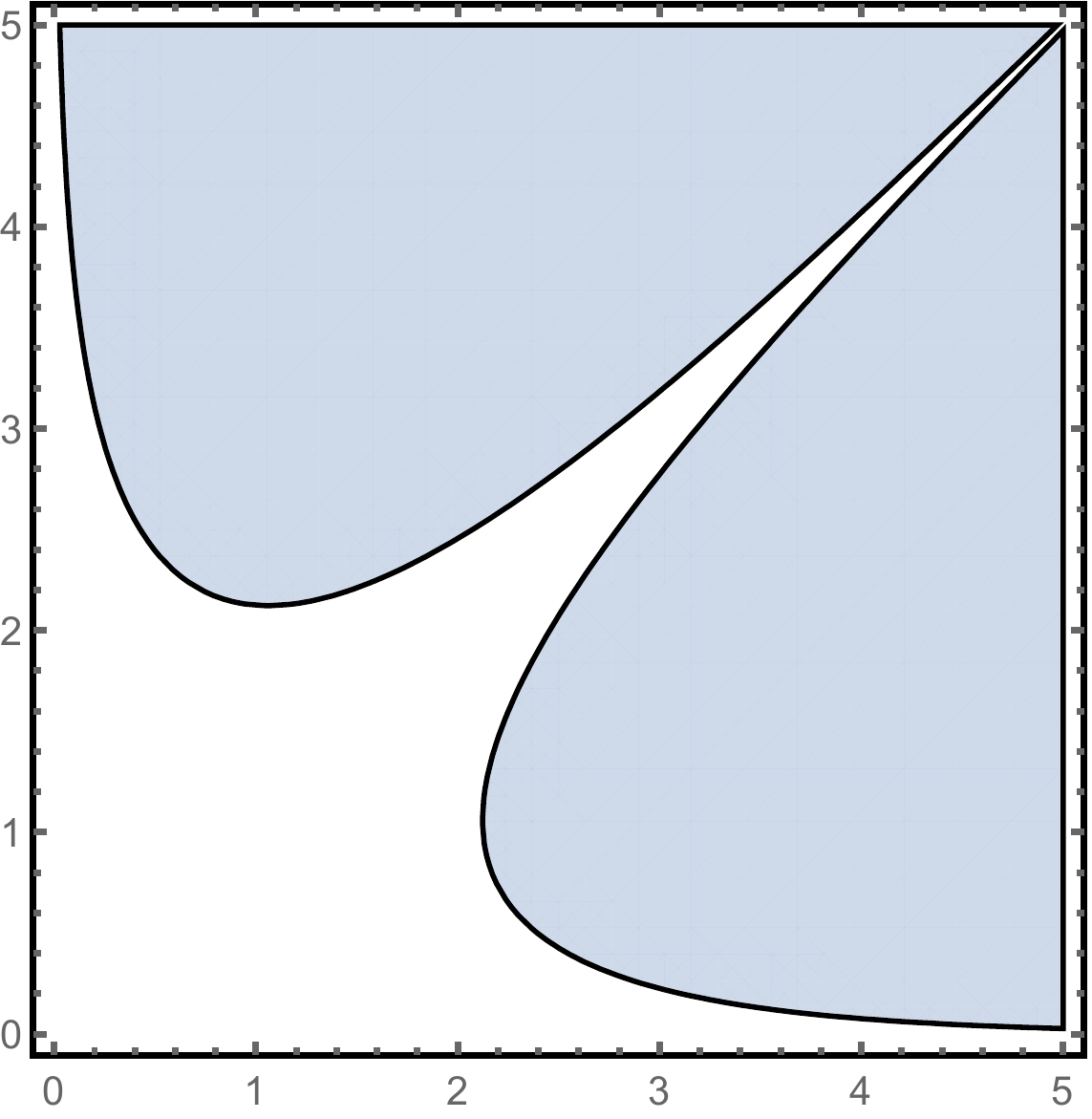}
\caption
{\small The set of admissible pairs $(x,y)$.}
\label{fig}
\end{center}
\end{figure}

We now claim that the set $S= \{\pm A, \pm B, \pm C, \pm D\}\subset \P(H)$  satisfies the condition of Definition~\ref{def:main}. To see this, it suffices to check the condition for $s_0=\pm A$ (the rest follows by symmetry reasons). We have
$$
p_{\pm A} (\pm B) = \pm \frac{B \cosh x - A}{\sinh x},
\;
p_{\pm A} (\pm C) = \pm \frac{C\cosh (x-y) - A}{\sinh (x-y)},
\;
p_{\pm A} (\pm D) = \pm \frac{D \cosh y - A}{\sinh y}.
$$
Hence,
$$
\langle p_{\pm A} (\pm B), p_{\pm A} (\pm C) \rangle
=
\pm \frac{\cosh (x-y) \cosh x/ \cosh y + 1 - 1 - 1}{\sinh x \sinh (x-y)} = \pm \frac 1 {\cosh y}
=
\pm \langle B, C \rangle.
$$
The relations for the pairs $C,D$ and $D,B$ can be checked similarly.

Finally, we claim that $S=\{\pm A, \pm B,\pm C,\pm D\}$ is not isometric to a subset of the helix from Example~\ref{ex:helix}. Our conditions on $x$ and $y$ ensure that the points $\pm A,\ldots,\pm D$ are pairwise different. If $t_1<t_2<t_3<t_4$ are real numbers, then $|\langle h(t_1), h(t_4) \rangle|$ is strictly smaller than the remaining scalar products $|\langle h(t_i), h(t_j)\rangle|$ with  $1\leq i<j\leq 4$, $(i,j)\neq (1,4)$. On the contrary, in the set $S$ all pairwise scalar products  can be decomposed into $3$ groups each consisting of $2$ equal products.
\end{example}

Now we can state our main result classifying sets which do not change their isometry type under spherical projections.
\begin{theorem}\label{theo:main1}
Let $S\subset \P(H)$ be a set satisfying the condition of Definition~\ref{def:main}. Then, we can represent $S$ as a disjoint union $S= \cup_{\alpha \in I} S_\alpha$ of pairwise orthogonal sets $S_\alpha$, $\alpha\in I$,  such that each $S_\alpha$ is  isometric either to a subset of the helix from Example~\ref{ex:helix} or to a four-point configuration from Example~\ref{ex:four_points_projective}.
\end{theorem}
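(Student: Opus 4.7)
The plan is to translate the invariance condition into a triangle-equality constraint on a derived metric on $S$, decompose $S$ by orthogonality, and then carry out a combinatorial-metric classification of each component.

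First I would extract the metric condition. Given any three points $\pm x,\pm y,\pm z\in S$ with nonzero pairwise scalar products, set $\alpha=\langle z,x\rangle$, $\beta=\langle z,y\rangle$, $\gamma=\langle x,y\rangle$. The invariance condition at $s_0=z$, combined with the explicit projection formula \eqref{eq:proj_def}, gives $(\gamma-\alpha\beta)^2=\gamma^2(1-\alpha^2)(1-\beta^2)$ after squaring. Defining $D(\pm u,\pm v):=\operatorname{arccosh}(1/|\langle u,v\rangle|)\in(0,\infty]$ and substituting $|\alpha|=\sech a$, $|\beta|=\sech b$, $|\gamma|=\sech c$, the identity reduces after clearing denominators to
\[
\cosh^2 a + \cosh^2 b + \cosh^2 c = 1 + 2\cosh a\cosh b\cosh c,
\]
which by the addition formula for $\cosh$ is equivalent to one of $a,b,c$ being the sum of the other two. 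Thus the invariance condition becomes the property that in the metric $(S,D)$ every triangle is \emph{degenerate}: its longest side equals the sum of the remaining two.

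Next I would decompose $S$ into orthogonal components and fix a sign normalization. The relation $\pm x\sim\pm y\iff D(\pm x,\pm y)<\infty$ is transitive---a degenerate triangle with two finite sides cannot have an infinite third---so it yields an orthogonal decomposition $S=\bigsqcup_{\alpha\in I}S_\alpha$. Within a class $S_\alpha$, the above expansion also gives $2\alpha\beta\gamma=\alpha^2\beta^2+\beta^2\gamma^2+\gamma^2\alpha^2-\alpha^2\beta^2\gamma^2$, whose right-hand side is nonnegative for $|\alpha|,|\beta|,|\gamma|\in(0,1]$; hence $\langle x,y\rangle\langle y,z\rangle\langle z,x\rangle\ge 0$ for every triple, and fixing a basepoint $x_0\in S_\alpha$ and choosing each other representative so that $\langle x_0,x\rangle>0$ forces all pairwise scalar products in $S_\alpha$ to be positive. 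With this normalization, $\langle x,y\rangle=1/\cosh D(x,y)$ throughout $S_\alpha$.

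The crux---which I expect to be the main obstacle---is then to classify abstract metric spaces $(M,D)$ in which every triangle is degenerate. Any three points of such an $M$ embed isometrically into $\R$ with the ``middle'' point (the one not on the longest side) between the outer two. Adding a fourth point, a case analysis over the four new triangles shows that either the fourth point extends the $\R$-embedding consistently, or the four points form the exceptional configuration of Example~\ref{ex:four_points_projective}, characterized abstractly by the existence of two disjoint pairs of points both realizing the diameter. A second case analysis shows this exceptional quadruple is rigid: the triangle-equality constraints from the six triangles formed with any putative fifth point together force that point to coincide with one of the four existing ones. Consequently $M$ is either isometric to a subset of $(\R,|\cdot|)$ or a single exceptional quadruple. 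Finally, an isometry $\phi\colon(S_\alpha,D)\to\R$ transports $S_\alpha$ isometrically to the subset $\{h(\phi(x))\}_{x\in S_\alpha}$ of the helix of Example~\ref{ex:helix}, since $\langle h(\phi(x)),h(\phi(y))\rangle=\sech(\phi(x)-\phi(y))=\langle x,y\rangle$ by the sign normalization; in the exceptional case, the pairwise scalar products of $S_\alpha$ are determined by $D$ and match those of Example~\ref{ex:four_points_projective} directly, so the algebraic reduction, the orthogonal decomposition, and this final realization are all essentially mechanical once the metric classification is in hand.
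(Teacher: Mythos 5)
Your proposal follows essentially the same route as the paper: the projection-invariance condition is converted, via the substitution $|\langle x,y\rangle|=1/\cosh d$, into the statement that the induced metric satisfies the triangle equality, the orthogonality relation yields the decomposition into mutually orthogonal classes, a sign normalization makes all scalar products positive within a class, and the classification of triangle-equality metric spaces (Theorem~\ref{theo:main2}, proved by the same four- and five-point case analysis you sketch) completes the argument. The only deviations are cosmetic, e.g.\ you deduce positivity of the products from the cubic identity $2\alpha\beta\gamma=\alpha^2\beta^2+\beta^2\gamma^2+\gamma^2\alpha^2-\alpha^2\beta^2\gamma^2$ rather than by the inequality argument of Lemma~\ref{lem:choose_positive_products}, so the proposal is correct and matches the paper's proof.
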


We shall prove Theorem~\ref{theo:main1} and its corollaries  in Section~\ref{sec:proof_main1}. The classification of Theorem~\ref{theo:main1} simplifies considerably  if we restrict our attention to sets $S$ which are continuous curves.

\begin{corollary}\label{cor:curve}
Let $\gamma: \R\to \P(H)$ be a continuous, injective map such that its image $\gamma(\R)$ satisfies the condition of Definition~\ref{def:main}. Then, $\gamma(\R)$ is isometric to $\{h(t)\colon t\in J\}$, where $h$ is as in Example~\ref{ex:helix} and  $J\subset \R$ is an open interval (which may be bounded, half-infinite or equal to $\R$).
\end{corollary}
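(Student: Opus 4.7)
The plan is to apply Theorem~\ref{theo:main1} to the image $\gamma(\R)$, decompose it as a pairwise orthogonal union $\bigsqcup_{\alpha\in I} S_\alpha$ of helix-subsets and exceptional four-point configurations, and then use the continuity and injectivity of $\gamma$ to show that only a single component is present and that it is parametrized by an open interval. The decisive structural input is a gap between within-component and between-component distances. Inside the helix of Example~\ref{ex:helix} one has $\rho(h(s),h(t)) = \arccos(1/\cosh(t-s)) \in [0,\pi/2)$, and inside the four-point configurations of Example~\ref{ex:four_points_projective} all pairwise distances also lie strictly below $\pi/2$ because $1/\cosh x$, $1/\cosh y$ and $1/\cosh(x-y)$ are positive; by contrast, orthogonality forces any two points from distinct components to be at distance exactly $\pi/2$.

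To collapse the decomposition to a single component, I would set $A_\alpha := \gamma^{-1}(S_\alpha)\subset\R$ and show that each $A_\alpha$ and each union $\bigcup_{\beta\neq\alpha}A_\beta$ is closed. Indeed, if $t_n\to t$ with $t_n\in A_\alpha$ but $\gamma(t)\in S_\beta$ for some $\beta\neq\alpha$, then $\rho(\gamma(t_n),\gamma(t)) = \pi/2$ for every $n$, contradicting $\rho(\gamma(t_n),\gamma(t))\to 0$; the symmetric statement, with the roles of $A_\alpha$ and its complement swapped, follows from the same $\pi/2$-gap. Connectedness of $\R$ then forces one of the two disjoint closed sets $A_\alpha$ and $\bigcup_{\beta\neq\alpha}A_\beta$ to be empty, and since $A_\alpha\neq\emptyset$ we conclude $I=\{\alpha\}$ and $\gamma(\R) = S_\alpha$.

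It remains to identify this unique component. The four-point configuration is finite whereas $\gamma(\R)$ is uncountable by injectivity of $\gamma$, so $S_\alpha$ must be isometric, via some isometry $\phi$, to a subset $\{h(t) : t\in K\}$ of the helix for some $K\subset\R$. Since $\rho(h(s),h(t))$ is a strictly increasing function of $|t-s|$, the map $h:\R\to\P(H)$ is a homeomorphism onto its image, and so $\psi := h^{-1}\circ\phi\circ\gamma:\R\to K$ is a continuous bijection. A continuous injection from $\R$ to $\R$ is strictly monotone and its image is an open interval (bounded, half-infinite, or all of $\R$), which yields the required parametrization $J := K$. The main obstacle is the connectedness argument in the middle paragraph, where one must combine both directions of the $\pi/2$-gap to show simultaneously that $A_\alpha$ and its complement are closed; once this is done, excluding the four-point case by cardinality and identifying $K$ as an open interval are routine.
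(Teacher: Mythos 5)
Your proposal is correct and follows essentially the same route as the paper: apply Theorem~\ref{theo:main1}, use the $\pi/2$ separation between orthogonal components together with connectedness (you argue on the preimages $A_\alpha\subset\R$, the paper on $\gamma(\R)$ itself) to reduce to a single component, discard the four-point case by cardinality, and exploit that $h$ is a homeomorphism onto its image. Your final step is a mild streamlining: by composing to get the continuous bijection $\psi=h^{-1}\circ\phi\circ\gamma:\R\to K$ and invoking monotonicity of continuous injections of $\R$, you obtain in one stroke that $K$ is an open interval, where the paper first shows the parameter set is an interval via a connectedness argument and then separately rules out non-open intervals.
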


Let us finally restate Theorem~\ref{theo:main1} in the language of Gaussian processes.

\begin{corollary}\label{cor:gaussian}
Consider a  zero-mean Gaussian process $Z=(Z(t))_{t\in T}$ such that for every $s_0\in T$ the conditional distribution of $(Z(t))_{t\in T}$ given that $Z(s_0)=0$ coincides with the distribution of $(\varphi(t; s_0) Z(t))_{t\in T}$ for some function $\varphi(t;s_0)$. Then, there is a disjoint decomposition  $T = \cup_{\alpha\in I} T_\alpha$ and a function $a: T\to \R$  such that the following hold:
\begin{itemize}
\item[(i)] $(Z(t))_{t\in T_\alpha}$ is independent of $(Z(t))_{t\in T_\beta}$ for all $\alpha\neq \beta$;
\item[(ii)] for each $\alpha\in I$ either there is a function $\psi_\alpha: T_\alpha \to \R$ such that
$$
(Z(t))_{t\in T_\alpha} \eqfdd (a(t) X(\psi_\alpha(t)))_{t\in T_\alpha}
$$
or there is a function $\psi_\alpha: T\to \{A,B,C,D\}$ such that
$$
(Z(t))_{t\in T_\alpha} \eqfdd (a(t) Y(\psi_\alpha(t)))_{t\in T_\alpha},
$$
where $(X(s))_{s\in\R}$ is as in~\eqref{eq:def_covar},  and $Y_{x,y} = ((Y(s))_{s\in \{A,B,C,D\}}$ is a zero-mean, unit variance Gaussian process with
$
\E [Y(A) Y(B)] = \E [Y(C) Y(D)]  = 1/ {\cosh x}
$,
$
\E [Y(A) Y(D)] = \E [Y(B) Y(C)] = 1/ {\cosh y},
$
$
\E [Y(A) Y(C)] = \E [Y(B) Y(D)] = 1/ {\cosh (x-y)}
$
for some pair $(x,y)$ which is admissible in the sense of Example~\ref{ex:four_points_projective}.
\end{itemize}
\end{corollary}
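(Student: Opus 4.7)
The plan is to recast the hypothesis as the geometric condition of Definition~\ref{def:main} and then invoke Theorem~\ref{theo:main1}. For every $t\in T$ with $\Var Z(t)>0$ I would set $a_0(t):=\sqrt{\Var Z(t)}$ and $\pm\tilde Z(t):=\pm Z(t)/a_0(t)\in\P(L^2(\Omega,\cF,\P))$; points with $Z(t)\equiv 0$ are degenerate and can be absorbed into any block with $a(t):=0$. First I would verify that $S:=\{\pm\tilde Z(t):\Var Z(t)>0\}$ satisfies Definition~\ref{def:main}. The hypothesis equates two centered jointly Gaussian families and therefore amounts to equality of covariance kernels; the Gaussian conditional-covariance formula gives
\begin{equation*}
\varphi(s;s_0)\varphi(t;s_0)\Cov(Z(s),Z(t)) = \Cov(Z(s),Z(t)) - \frac{\Cov(Z(s),Z(s_0))\Cov(Z(t),Z(s_0))}{\Var Z(s_0)}.
\end{equation*}
Setting $s=t$ forces $\varphi(s;s_0)^2=1-\rho(s,s_0)^2$ with $\rho$ the correlation, and substituting back and dividing by $a_0(s)a_0(t)$ reproduces exactly the identity~\eqref{eq:scalar_prod_preserved} for the triple $\pm\tilde Z(s_0),\pm\tilde Z(s),\pm\tilde Z(t)$.

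Having established the hypothesis of Theorem~\ref{theo:main1}, I would apply it to decompose $S=\bigsqcup_{\alpha\in I}S_\alpha$ into pairwise orthogonal blocks, each isometric in $\P$ to a subset of the helix of Example~\ref{ex:helix} or to the four-point configuration of Example~\ref{ex:four_points_projective}. Setting $T_\alpha:=\{t\in T:\pm\tilde Z(t)\in S_\alpha\}$ (absorbing the degenerate points into any chosen block) makes property~(i) immediate: orthogonality $\langle u,v\rangle=0$ for $\pm u\in S_\alpha$, $\pm v\in S_\beta$ translates into $\Cov(Z(s),Z(t))=0$ for $s\in T_\alpha$, $t\in T_\beta$, and uncorrelated sub-families of a jointly Gaussian centered family are independent.

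For property~(ii) the isometry $\phi_\alpha$ induces a function $\psi_\alpha:T_\alpha\to\R$ (respectively $\psi_\alpha:T_\alpha\to\{A,B,C,D\}$) with $|\langle\tilde Z(s),\tilde Z(t)\rangle|=1/\cosh(\psi_\alpha(s)-\psi_\alpha(t))$ in the helix case and the analogous four-point formula otherwise. The main obstacle will be that this pins down only the absolute values of the normalized covariances, whereas the target processes $X$ and $Y_{x,y}$ have all-positive covariance kernels: I must reconcile signs. To this end I would expand~\eqref{eq:scalar_prod_preserved} for an arbitrary triple and obtain, with $p,q,r$ the three absolute scalar products,
\begin{equation*}
2\,\langle\tilde Z(s),\tilde Z(t)\rangle\,\langle\tilde Z(s),\tilde Z(u)\rangle\,\langle\tilde Z(t),\tilde Z(u)\rangle = p^{2}q^{2}(1-r^{2})+r^{2}(p^{2}+q^{2}) \geq 0,
\end{equation*}
so the signed product of the three normalized covariances around every triangle is non-negative. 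Since within each $S_\alpha$ all pairwise inner products are strictly non-zero (no two distinct points of the helix or of the four-point set are orthogonal), the non-orthogonality graph on $T_\alpha$ is complete and triangle-consistency determines a function $\epsilon:T_\alpha\to\{\pm 1\}$, unique up to a global flip, with $\operatorname{sign}\Cov(Z(s),Z(t))=\epsilon(s)\epsilon(t)$; concretely, fix a base point $t_0$ and put $\epsilon(t):=\operatorname{sign}\Cov(Z(t_0),Z(t))$. Setting $a(t):=\epsilon(t)a_0(t)$ and using the symmetry $X\eqfdd -X$ to absorb any lingering sign, the covariance kernel of $(Z(t))_{t\in T_\alpha}$ becomes identical to that of $(a(t)X(\psi_\alpha(t)))_{t\in T_\alpha}$, which together with joint Gaussianity and centering yields equality of finite-dimensional distributions. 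The four-point case is handled identically with $Y_{x,y}$ in place of $X$.
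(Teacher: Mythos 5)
Your proposal is correct and follows essentially the same route as the paper: translate the hypothesis via the Gaussian regression formula into the invariance condition of Definition~\ref{def:main} for the normalized process viewed in the projective $L^2$-space, apply Theorem~\ref{theo:main1}, and read orthogonality of the blocks as independence of the corresponding sub-processes. The only cosmetic difference is the sign step: where the paper invokes Lemma~\ref{lem:choose_positive_products} to produce $a(t)\in\{-1,1\}$ making all covariances within a block positive, you re-derive the same fact directly from the squared identity (non-negativity of the product of the three signed covariances over each triangle) together with a choice of base point, which is an equivalent argument.
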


\subsection{Metric spaces with triangle equality}
In the proof of Theorem~\ref{theo:main1} given in Section~\ref{sec:proof_main1} we shall reduce Theorem~\ref{theo:main1} to the classification of metric spaces with the following property.
\begin{definition}
We say that a metric space $(E,d)$ satisfies the \textit{triangle equality} if for every three points $x,y,z\in E$ the largest of the numbers $d(x,y)$, $d(y,z)$, $d(z,x)$ equals the sum of the remaining two.
\end{definition}

An example of such metric space is any subset of the real line with the usual metric $d(x,y)= |x-y|$.

\begin{example}\label{example:4_points}
The following space of $4$ points satisfies the triangle equality but cannot be isometrically embedded into the real line: $E= \{A,B,C,D\}$ with
$$
d(A,B) = d(C,D) = x, \quad
d(A, D) = d(B,C) = y, \quad
d(A,C) = d(B,D) = |x-y|.
$$
Here, $x>0$ and $y>0$ are arbitrary numbers with $x\neq y$.
\end{example}

\begin{theorem}\label{theo:main2}
If $(E,d)$ is a metric space satisfying the triangle equality and whose cardinality is different from $4$, then it is isometric to a subset of the real line. If $E$ has exactly $4$ points, then it either can be isometrically embedded into the real line or is isometric to one of the spaces from Example~\ref{example:4_points}.
\end{theorem}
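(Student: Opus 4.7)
The plan is to construct a candidate isometric embedding $\pi\colon E\to\R$ from a choice of two reference points, and to show that it succeeds unless $E$ is forced to be the exceptional four-point space. The cases $|E|\le 3$ are immediate: the triangle equality singles out a middle point in each triple, which can be placed between the other two on the line.

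Assume $|E|\ge 4$. Fix distinct $p,q\in E$ and set $L:=d(p,q)$. For every $x\in E$, the triangle equality applied to $\{p,q,x\}$ identifies exactly one of the three betweenness relations and thereby assigns a signed coordinate $\pi(x)\in\R$ satisfying $d(p,x)=|\pi(x)|$ and $d(q,x)=|\pi(x)-L|$; in particular $\pi(p)=0$ and $\pi(q)=L$. The technical heart of the argument is the following claim: for any $x,y\in E$, either $d(x,y)=|\pi(x)-\pi(y)|$, or the quadruple $\{p,q,x,y\}$ is isometric to the exceptional configuration of Example~\ref{example:4_points}. I would prove it by combining the triangle equalities for the triples $\{p,x,y\}$ and $\{q,x,y\}$: each forces $d(x,y)$ into a two-element set built from $|\pi(x)|$, $|\pi(y)|$, $|\pi(x)-L|$, $|\pi(y)-L|$. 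The value $|\pi(x)-\pi(y)|$ always lies in both sets, and a short case analysis on the positions of $\pi(x),\pi(y)$ relative to $[0,L]$ shows that the only non-trivial alternative is $\pi(x)+\pi(y)=L$ with $d(x,y)=L$; the six pairwise distances of $\{p,q,x,y\}$ then split into the pairs of equals $\{p,q\}\leftrightarrow\{x,y\}$, $\{p,x\}\leftrightarrow\{q,y\}$, $\{p,y\}\leftrightarrow\{q,x\}$ required by Example~\ref{example:4_points}.

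The remaining step is to show that the existence of an exceptional quadruple $\{p,q,x,y\}\subseteq E$ forces $E=\{p,q,x,y\}$. Suppose some $z\in E\setminus\{p,q,x,y\}$ exists. Applying the previous step to the pairs $(x,z)$ and $(y,z)$, I either read off $d(x,z)=|\pi(x)-\pi(z)|$ and $d(y,z)=|\pi(y)-\pi(z)|$, or a second exceptional quadruple $\{p,q,x,z\}$ or $\{p,q,y,z\}$ appears. In the latter situation $\pi(z)$ is forced to equal $\pi(y)$ (respectively $\pi(x)$), and the triangle equalities for $\{y,z,p\}$ and $\{y,z,q\}$ then demand two incompatible values for $d(y,z)$ (a separate argument handles the degenerate case $\pi(x)=L/2$). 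Once $d(x,z)$ and $d(y,z)$ are pinned down by $\pi$, the triple $\{x,y,z\}$ with $d(x,y)=L$ fails triangle equality by direct inspection, a contradiction.

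Assembling these pieces, $\pi$ is an isometric embedding of $E$ into $\R$ whenever $|E|\ne 4$, while for $|E|=4$ one obtains either an embedding into $\R$ or the exceptional configuration. The main obstacle is the case analysis in the second paragraph; more subtly, the third paragraph must rule out the coexistence of two distinct four-point exceptions sharing the reference pair $\{p,q\}$, which is precisely where the degenerate parameter $\pi(x)=L/2$ needs dedicated attention.
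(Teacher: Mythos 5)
Your proposal is correct, and it takes a genuinely different route from the paper's proof. The paper first classifies four-point spaces by normalizing the attained diameter to $1$ and running a nine-case analysis, then separately shows that a five-point space cannot contain a non-classical quadruple through the diameter pair, and only afterwards glues these finite results into a global embedding anchored at two reference points. You instead fix an arbitrary reference pair $p,q$ from the outset, coordinatize every point by the signed position $\pi(x)$ dictated by the triangle equality on $\{p,q,x\}$, and prove a single dichotomy: for each pair either $d(x,y)=|\pi(x)-\pi(y)|$ or $\pi(x)+\pi(y)=L$, $d(x,y)=L$, and $\{p,q,x,y\}$ is the configuration of Example~\ref{example:4_points}. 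I checked the asserted case analysis: with $a=\pi(x)$, $b=\pi(y)$, the two triangle equalities put $d(x,y)$ in $\{|a|+|b|,\,\bigl||a|-|b|\bigr|\}\cap\{|a-L|+|b-L|,\,\bigl||a-L|-|b-L|\bigr|\}$; the mixed combinations collapse to $|a-b|$ via $\bigl||u|-|v|\bigr|\le|u-v|\le|u|+|v|$, and the remaining ones force $a+b=L$ and $d=L$, which is the exceptional pattern even when $a,b$ fall outside $[0,L]$ or when $a=b=L/2$ (parameters $L$ and $L/2$). Your approach buys a uniform treatment of all cardinalities, including infinite $E$, with no appeal to an attained diameter and no separate five-point step; the paper's route buys very elementary bookkeeping inside $[0,1]$ at the cost of the two finite classifications plus a patching argument. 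Two spots in your third paragraph are only sketched but do go through: in the degenerate case $\pi(x)=\pi(y)=L/2$, an exceptional $\{p,q,x,z\}$ gives $d(x,z)=L$ and $\pi(z)=L/2$, whence $(y,z)$ is either linear with $d(y,z)=0$ (so $y=z$) or exceptional with $d(y,z)=L$, making $\{x,y,z\}$ an all-$L$ triangle; and in the both-linear case the triangle equality for $\{x,y,z\}$ does not fail for every value of $\pi(z)$ --- it holds exactly when $\pi(z)\in\{0,L\}$ --- so you must add that this forces $z=p$ or $z=q$, which is excluded. With these routine completions your argument is a full proof of Theorem~\ref{theo:main2}.
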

The proof will be given in Section~\ref{proof:main_2}.

\subsection{Open questions}
The property of Gaussian processes studied here was used in~\cite{peres_virag,matsumoto_shirai,poplavskyi_schehr} to establish the determinantal/Pfaffian character of the zeroes of the corresponding process. It is natural to ask for a description of all (sufficiently smooth) stationary, centered, Gaussian processes whose zeroes form a Pfaffian/determinantal point process.
For example, the zero-mean, stationary \textit{complex-valued} Gaussian process $(X_{\C}(t))_{t\in\R}$ with
$$
\E [X_{\C}(s) X_{\C}(t)] = 0, \qquad \E [X_{\C}(s) \overline{X_{\C}(t)}] = \frac{1}{\cosh (s-\overline{t})}, \qquad s,t\in\R,
$$
can be extended to an analytic function on the strip $\{t\in \C\colon |\Im t| <\pi/4\}$ and its complex zeroes form a determinantal point process with  kernel
$$
K(s,t) = \frac{1}{\cosh^2 (s-\overline{t})}.
$$
This can be easily derived from the result of~\cite{peres_virag} by applying the time-change~\eqref{eq:time_change}. It is natural to conjecture that if a zero-mean, unit-variance, stationary complex Gaussian process admits an analytic continuation to some strip $\{t\in \C: |\Im t|<\eps\}$ and its zeroes form a determinantal point process there, then this process has the same law as $(\eee^{i\kappa t} X_\C(\alpha t))_{t\in\R}$ for some $\kappa\in\R$ and $\alpha>0$. Similarly, one may wonder whether every stationary, smooth, zero-mean and unit-variance Gaussian process on $\R$ whose real zeroes form a Pfaffian point process is necessarily of the form $(X(\alpha t))_{t\in\R}$ for some $\alpha>0$.


\section{Proof of Theorem~\ref{theo:main1} and its corollaries}\label{sec:proof_main1}

\subsection{Proof of Theorem~\ref{theo:main1}}
Let $S\subset \P(H)$ be a set having the property of Definition~\ref{def:main}.
\begin{lemma}\label{lem:orthogonal}
If for some $\pm x_1,\pm x_2\in S$ we have $x_1\bot x_2$, then every $\pm y\in S$ is orthogonal to at least one of the elements $\pm x_1$ or $\pm x_2$.
\end{lemma}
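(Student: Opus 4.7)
The natural move is to apply the scalar-product preservation identity \eqref{eq:scalar_prod_preserved} with $\pm s_0 = \pm x_1$ to the pair $\pm x_2, \pm y$. The orthogonality hypothesis $\langle x_1, x_2\rangle = 0$ makes the left-hand side particularly transparent: from the defining formula \eqref{eq:proj_def} one immediately gets $p_{\pm x_1}(\pm x_2) = \pm x_2$, so the projection leaves $\pm x_2$ in place and only the image of $\pm y$ needs to be expanded.

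Explicitly, I would write
\[
p_{\pm x_1}(\pm y) = \pm \frac{y - x_1 \langle x_1, y\rangle}{\sqrt{1-\langle x_1, y\rangle^2}},
\]
so that, using $\langle x_1, x_2\rangle = 0$ once more,
\[
\langle p_{\pm x_1}(\pm x_2),\, p_{\pm x_1}(\pm y)\rangle \;=\; \frac{\langle x_2, y\rangle - \langle x_1, x_2\rangle\,\langle x_1, y\rangle}{\sqrt{1-\langle x_1, y\rangle^2}} \;=\; \frac{\langle x_2, y\rangle}{\sqrt{1-\langle x_1, y\rangle^2}}.
\]
Equating the modulus of this quantity to $|\langle x_2, y\rangle|$, as required by the hypothesis on $S$, yields
\[
|\langle x_2, y\rangle|\left(\tfrac{1}{\sqrt{1-\langle x_1, y\rangle^2}} - 1\right) = 0.
\]
From this algebraic identity the dichotomy is immediate: either $\langle x_2, y\rangle = 0$, in which case $y \perp x_2$, or the second factor vanishes, forcing $\langle x_1, y\rangle^2 = 0$, i.e.\ $y \perp x_1$. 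This is exactly the claim of the lemma.

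\textbf{Potential obstacle.} The argument is really a one-line computation, so I do not anticipate any substantive difficulty. The only thing to be slightly careful about is making sure the spherical projection $p_{\pm x_1}$ is well-defined at $\pm y$ and $\pm x_2$, i.e.\ that neither of these points equals $\pm x_1$; but if $\pm y = \pm x_1$ the conclusion is trivial, and $\pm x_2 \neq \pm x_1$ holds since $x_1 \perp x_2$ with both vectors of unit norm. Thus the entire proof amounts to recognizing that when $x_1 \perp x_2$ the projection $p_{\pm x_1}$ fixes $\pm x_2$, and then reading off the consequence of the isometry hypothesis.
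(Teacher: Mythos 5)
Your proof is correct and follows essentially the same route as the paper: project with $p_{\pm x_1}$, note that $\pm x_2$ is fixed since $x_1\perp x_2$, expand $\langle p_{\pm x_1}(\pm x_2), p_{\pm x_1}(\pm y)\rangle = \pm\langle x_2,y\rangle/\sqrt{1-\langle x_1,y\rangle^2}$, and compare with~\eqref{eq:scalar_prod_preserved} to force $\langle x_1,y\rangle=0$ or $\langle x_2,y\rangle=0$. The extra care about $\pm y\neq\pm x_1$ and $\pm x_2\neq\pm x_1$ matches the paper's implicit assumptions and is fine.
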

\begin{proof}
Take some $\pm y\in S$ with $\pm y\neq \pm x_1$. We evidently have
\begin{align*}
&p_{\pm x_1}(\pm x_2) = \pm x_2,\\
&p_{\pm x_1} (\pm y) = \pm\frac{y-x_1 \langle x_1,y\rangle}{\sqrt{1-\langle x_1,y\rangle^2}}.
\end{align*}
It follows that
$$
\langle p_{\pm x_1}(\pm x_2), p_{\pm x_1}(\pm y)\rangle
=
\pm
\left\langle x_2,\frac{y-x_1 \langle x_1,y\rangle}{\sqrt{1-\langle x_1,y\rangle^2}}\right\rangle
=
\pm\frac{\langle x_2,y\rangle}{\sqrt{1-\langle x_1,y\rangle^2}}.
$$
On the other hand, by~\eqref{eq:scalar_prod_preserved} we have
$$
\langle p_{\pm x_1}(\pm x_2), p_{\pm x_1}(\pm y)\rangle
=
\pm \langle x_2,y\rangle
.
$$
Comparing these two results, we obtain that $\langle x_1,y\rangle =0$ or $\langle x_2,y\rangle =0$.
\end{proof}

\begin{lemma}\label{lem:orthogonal_equivalence}
For two elements $\pm x,\pm y\in S$ write $\pm x\sim \pm y$ if $\langle x, y\rangle \neq 0$. Then, $\sim$ is an equivalence relation on $S$.
\end{lemma}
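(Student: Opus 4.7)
The plan is to check the three axioms of an equivalence relation, with the real content being transitivity, which will follow directly from Lemma~\ref{lem:orthogonal}.

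Reflexivity is immediate since $\langle x, x \rangle = 1 \neq 0$ for any $\pm x \in S \subset \P(H)$. Symmetry is equally trivial because $\langle x, y \rangle = \langle y, x \rangle$, so its vanishing is a symmetric condition (and note that the condition $\langle x, y \rangle \neq 0$ is well-defined on the projective space up to a sign, hence depends only on $\pm x$ and $\pm y$).

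The interesting step is transitivity. Suppose $\pm x \sim \pm y$ and $\pm y \sim \pm z$ with $\pm x, \pm y, \pm z \in S$; I want to show $\pm x \sim \pm z$, i.e., $\langle x, z \rangle \neq 0$. I would argue by contradiction: assume $\langle x, z \rangle = 0$, so that $x \perp z$. Then Lemma~\ref{lem:orthogonal}, applied with $x_1 = x$ and $x_2 = z$, tells us that every element of $S$ is orthogonal to at least one of $\pm x$ or $\pm z$. Applying this to the element $\pm y \in S$ yields $\langle y, x \rangle = 0$ or $\langle y, z \rangle = 0$, both of which contradict the assumed nonvanishing from $\pm x \sim \pm y$ and $\pm y \sim \pm z$.

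There is no real obstacle here beyond recognizing that Lemma~\ref{lem:orthogonal} has already done all the geometric work: once one knows that orthogonality of a pair forces every third point to be orthogonal to one of them, the failure of transitivity of $\sim$ is immediately ruled out. The lemma thus reduces the proof to a one-line contrapositive argument.
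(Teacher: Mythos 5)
Your proof is correct and follows essentially the same route as the paper: reflexivity and symmetry are immediate, and transitivity is deduced by contraposition from Lemma~\ref{lem:orthogonal}, exactly as in the paper's argument.
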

\begin{proof}
It is clear that $\pm x\sim \pm x$. Also,  $\pm x\sim \pm y$ if and only if $\pm y\sim \pm x$. We show that the relation $\sim$ is transitive. Let $\pm x\sim \pm y$ and $\pm y\sim \pm z$. If, by contraposition, $x$ is orthogonal to $z$, then by Lemma~\ref{lem:orthogonal} we would have $y\bot x$ or $y\bot z$, which is in both cases a contradiction. So, $x$ is not orthogonal to $z$, which means that $\pm x\sim \pm z$.
\end{proof}

By Lemma~\ref{lem:orthogonal_equivalence}, we can always decompose $S$ into pairwise orthogonal equivalence classes and analyse these separately. In the following, we assume that $S$ is irreducible, that is it consists of just one equivalence class. We shall now construct a set $T\subset \SS(H)$ (not $\P(H)$!) such that for every $\pm s\in S$ we have either $s\in T$ or $-s\in T$, but not both. Take some arbitrary $\pm s_0\in S$ and define
$$
T = \{x\in \SS(H)\colon \pm x\in S, \langle x, s_0\rangle>0\}.
$$
Note that $s_0\in T$ and $\langle x, s_0\rangle>0$ for all $x\in T$.

\begin{lemma}\label{lem:choose_positive_products}
We have $\langle x,y\rangle >0$ for all $x,y\in T$.
\end{lemma}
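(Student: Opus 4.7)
The plan is to apply the scalar-product preservation condition \eqref{eq:scalar_prod_preserved} at the point $\pm s_0$ and then extract the sign of $\langle x,y\rangle$ from the resulting identity. First, I would dispose of the trivial cases: if $x=s_0$ or $y=s_0$, then $\langle x,y\rangle>0$ is just the definition of $T$; and if $x=y$, then $\langle x,y\rangle=1>0$. So from now on assume $x,y,s_0$ are pairwise distinct unit vectors of $T$.

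Next, I would introduce the abbreviations $a:=\langle s_0,x\rangle$, $b:=\langle s_0,y\rangle$, $c:=\langle x,y\rangle$. By construction of $T$ we have $a,b\in(0,1)$; and since we are in the irreducible case (a single $\sim$-equivalence class) Lemma~\ref{lem:orthogonal_equivalence} forces $c\neq 0$. The obvious unit-vector lifts of $p_{\pm s_0}(\pm x)$ and $p_{\pm s_0}(\pm y)$ from \eqref{eq:proj_def} are
\[
u=\frac{x-a\,s_0}{\sqrt{1-a^2}},\qquad v=\frac{y-b\,s_0}{\sqrt{1-b^2}},
\]
and a direct computation using $\langle s_0,s_0\rangle=1$ gives $\langle u,v\rangle = (c-ab)/\sqrt{(1-a^2)(1-b^2)}$.

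Inserting this into \eqref{eq:scalar_prod_preserved} reduces the lemma to the one-line algebraic identity $|c-ab|=|c|\,k$, where $k:=\sqrt{(1-a^2)(1-b^2)}\in(0,1)$. Removing the absolute values leaves the two sign cases $c(1-k)=ab$ and $c(1+k)=ab$. In both cases $ab>0$ and the coefficient of $c$ is strictly positive, so necessarily $c>0$, as required. The only point that needs care is to confirm that $k$ lies strictly between $0$ and $1$ so that neither $1-k$ nor $1+k$ vanishes; this is immediate from $a,b\in(0,1)$, which is where we use the reduction to $x,y\neq s_0$. I do not see any real obstacle here — once the projection formula has been expanded the proof is a short algebraic computation, and the only subtle point is the bookkeeping that $k\in(0,1)$ strictly.
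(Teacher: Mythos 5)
Your proof is correct and follows essentially the same route as the paper: both expand the spherical projection at $s_0$, equate $|\langle x,y\rangle - \langle x,s_0\rangle\langle y,s_0\rangle|/\sqrt{(1-\langle s_0,x\rangle^2)(1-\langle s_0,y\rangle^2)}$ with $|\langle x,y\rangle|$ via~\eqref{eq:scalar_prod_preserved}, and conclude from $\langle x,s_0\rangle\langle y,s_0\rangle>0$ and the denominator being strictly less than $1$. The only cosmetic difference is that you extract the sign directly from $c(1\mp k)=ab$ while the paper argues by contraposition from $\langle x,y\rangle\leq 0$ (which also makes your appeal to irreducibility for $c\neq 0$ unnecessary, since $c=0$ would already force $ab=0$).
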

\begin{proof}
The claim is trivial if $x=s_0$ or $y=s_0$, so let in the following $x,y\in T\backslash \{s_0\}$.
We have
$$
p_{\pm s_0}(\pm x) = \pm\frac{x-s_0 \langle s_0,x\rangle}{\sqrt{1-\langle s_0,x\rangle^2}},
\qquad
p_{\pm s_0}(\pm y) = \pm\frac{y-s_0 \langle s_0,y\rangle}{\sqrt{1-\langle s_0,y\rangle^2}}.
$$
By~\eqref{eq:scalar_prod_preserved}, we have
\begin{align}\label{eq:tech1}
\langle p_{\pm s_0}(\pm x), p_{\pm s_0}(\pm y) \rangle
=
\pm \langle x, y\rangle.
\end{align}
On the other hand,
\begin{align}\label{eq:tech2}
\langle p_{\pm s_0}(\pm x), p_{\pm s_0}(\pm y) \rangle
=
\pm \frac{\langle x, y\rangle - \langle x,s_0 \rangle \langle y,s_0\rangle}{\sqrt{1-\langle s_0,x\rangle^2}\sqrt{1-\langle s_0,y\rangle^2}}.
\end{align}
Note that $\sqrt{1-\langle s_0,x\rangle^2}\sqrt{1-\langle s_0,y\rangle^2} < 1$ and $\langle x,s_0 \rangle \langle y,s_0\rangle>0$. Assuming by contraposition that $\langle x, y\rangle \leq  0$, we obtain
$$
\left|\frac{\langle x, y\rangle - \langle x,s_0 \rangle \langle y,s_0\rangle}{\sqrt{1-\langle s_0,x\rangle^2}\sqrt{1-\langle s_0,y\rangle^2}}\right|
=
\frac{|\langle x, y\rangle| + \langle x,s_0 \rangle \langle y,s_0\rangle}{\sqrt{1-\langle s_0,x\rangle^2}\sqrt{1-\langle s_0,y\rangle^2}}
 > |\langle x,y\rangle|,
$$
which is a contradiction.
\end{proof}

So $\langle x,y\rangle >0$ for all $x,y\in T$ and from equations~\eqref{eq:tech1} and~\eqref{eq:tech2} with $s_0$ replaced by an arbitrary $z\in T$ we get
$$
\frac{\langle x, y\rangle - \langle x,z \rangle \langle y,z\rangle}{\sqrt{1-\langle x,z\rangle^2}\sqrt{1-\langle y,z\rangle^2}}
=
\pm \langle x, y\rangle
$$
for all $x,y,z\in T$ such that $x\neq z$ and $y\neq z$.
Consider the function
$$
b(x,y) = \frac 1 {\langle x, y\rangle}, \qquad x,y\in T.
$$
Then, $b(x,x) = 1$ and $b(x,y) >1$ for $x\neq y$. The above functional equation takes the form
$$
\frac{b(x,z)b(y,z) - b(x,y)}{\sqrt{b^2(x,z) -1} \sqrt{b^2(y,z) -1}} = \pm 1
$$
for all $x,y,z\in T$ such that $x\neq z$ and $y\neq z$. Now we can introduce the function $c(x,y)$ as the only solution of
$$
b(x,y) = \frac 12 \left(c(x,y) + \frac 1 {c(x,y)}\right)
$$
with $c(x,x) = 1$ and $c(x,y)>1$ for $x\neq y$. The other solution is then $1/c(x,y)<1$.
The above functional equation takes the form
\begin{multline}\label{eq:funct_eq_c}
\left(c(x,z) + \frac 1 {c(x,z)}\right)\left(c(y,z) + \frac 1 {c(y,z)}\right) - 2 \left(c(x,y) + \frac 1 {c(x,y)}\right)
\\=
\pm  \left(c(x,z) - \frac 1 {c(x,z)}\right) \left(c(y,z) - \frac 1{c(y,z)}\right)
\end{multline}
for all $x,y,z\in T$ such that $x\neq z$ and $y\neq z$. One easily checks that this in fact holds for all $x,y,z\in T$.
If the sign on the right-hand side of~\eqref{eq:funct_eq_c} is positive, one gets after simple algebra
$$
\frac{c(y,z)}{c(x,z)} + \frac{c(x,z)}{c(y,z)} = c(x,y) + \frac 1 {c(x,y)},
$$
which implies that
$$
\frac{c(x,z)}{c(y,z)} = c(x,y)
\quad
\text{ or }
\quad
\frac{c(y,z)}{c(x,z)} = c(x,y).
$$
If the sign on the right-hand side of~\eqref{eq:funct_eq_c} is negative, then we similarly arrive at
$$
c(x,z)c(y,z) + \frac 1 {c(x,z)c(y,z)} = c(x,y) + \frac 1 {c(x,y)},
$$
which implies that
$$
c(x,z)c(y,z) = c(x,y)
\quad
\text{ or }
\quad
c(x,z) c(y,z) = \frac 1 {c(x,y)}.
$$
The latter equality is impossible if not all points $x,y,z$ are equal because $c(x,y)\geq 1$ with equality only if $x=y$.   

To summarize: For arbitrary $x,y,z\in T$, one of the three numbers $c(x,y)$, $c(y,z)$, $c(z,x)$ equals the product of the remaining two.
Introducing finally
$$
d(x,y) =\log c(x,y),
$$
we see that $d$ is a metric on $T$ that satisfies the triangle equality. By Theorem~\ref{theo:main2}, $(T,d)$ is either isometric to a subset $A$ of the real line, or to a four-point metric space from Example~\ref{example:4_points}. Observing that the scalar product $\langle x, y\rangle$ is related to $d(x,y)$ via
$$
\langle x, y\rangle = \frac 1 {b(x,y)} = \frac 2 {c(x,y) + 1/c(x,y)} = \frac {1}{\cosh d(x,y)},
$$
we arrive at the conclusion that (in the irreducible case) $S$ is isometric either to a subset of the helix from Example~\ref{ex:helix} or to some four-point set from Example~\ref{ex:four_points_projective}.

\subsection{Proof of Corollary~\ref{cor:curve}}
Let $S := \gamma(\R)= \cup_{\alpha\in I} S_\alpha$ be the decomposition given in Theorem~\ref{theo:main1}. The set $\gamma(\R)$, being a continuous image of a connected set, is connected in the topology induced from $\P(H)$.  Since the distance between any elements $\pm u\in S_\alpha$ and $\pm v\in S_\beta$ with $\alpha\neq \beta$ equals $\pi/2$, the connectedness of $\gamma(\R)$ implies that there is just one set $S_\alpha$ in the decomposition. It cannot be a four-point configuration since $\gamma(\R)$ is infinite by the injectivity of $\gamma$. So, $\gamma(\R)$ is isometric to $\{h(t)\colon t\in A\}$ for some set $A\subset \R$.

We claim that if $a_1<a_2<a_3$ are real numbers with $a_1\in A$ and $a_3\in A$, then $a_2\in A$. Indeed, assuming that $a_2\notin A$, we can represent $h(A)$ as a disjoint union of the sets $\{h(t)\colon t\in A, t<a_2\}$ and $\{h(t)\colon t\in A, t>a_2\}$. Both sets are non-empty since they contain $h(a_1)$ and $h(a_3)$, respectively, and both are open in the induced topology of $h(A)$ because $h$, being a homeomorphism between $A$ and $h(A)$, is an open map. But this is a contradiction, since $h(A)$ is isometric to $S$, which is connected. The fact that $h:\R \to \P(H)$ is a homeomorphism onto its image follows from the formula
$$
d(h(s), h(t)) =  \arccos \frac{1}{\cosh (t-s)}.
$$
It follows that $A$ is a (not necessarily open) interval. Assume, for example, that $A=[a,b)$. On the one hand, $h(A)$ is homeomorphic to $[a,b)$. On the other hand, $h(A)$ is isometric to $\gamma(\R)$. Hence, $\gamma(\R)$ is homeomorphic to $[a,b)$. That is, $\gamma$ is an injective, continuous map from $\R$ to a metric space homeomorphic to $[a,b)$. However, such map does not exist. Indeed, if $\gamma_*:\R\to [a,b)$ is injective and continuous, then it must be strictly monotone. But then $\gamma_*(\R)$ is an open interval, a contradiction.

\subsection{Proof of Corollary~\ref{cor:gaussian}}
For every point $t\in T$ with $Z(t)=0$ a.s.\ we can define a class $T_\alpha= \{t\}$ and put $a(t)=0$. In the following, let $\Var Z(t)>0$ for all $t\in T$. We may even assume that $\Var Z(t) = 1$, otherwise replace $Z(t)$ by $Z(t)/\sqrt{\Var Z(t)}$. 
Declare two points $t_1,t_2\in T$ to be equivalent if $\Cov(Z(t_1), Z(t_2)) = \pm 1$. After selecting one representative from each equivalence class and discarding the remaining elements, we may assume that $|\Cov(Z(s),Z(t))|<1$ for all $s\neq t$. (Note that in the statement of the corollary, $\psi_\alpha$ is not required to be injective and, in fact, we choose  it to be constant on equivalence classes).

It is known that the conditional law of the process $(Z(t))_{t\in T}$ given that $Z(s_0)=0$ is the same as the law of the process $(Z(t) - \Cov(Z(t),Z(s_0))Z(s_0))_{t\in T}$. On the other hand, it is the same as the law of the process $(\varphi(t;s_0)Z(t))_{t\in T}$. Comparing the variances, we arrive at $1 - \Cov^2(Z(t),Z(s_0))  = \varphi^2(t; s_0)$, so that $\varphi(t; s_0)\neq 0$ for $t\neq s_0$. Standardizing both processes, we obtain
\begin{equation}\label{eq:cond_covar}
\left(\frac{Z(t)-\Cov(Z(t),Z(s_0))Z(s_0)}{\sqrt{1-\Cov^2(Z(t),Z(s_0))}}\right)_{t\in T\backslash\{s_0\}} \eqfdd (Z(t))_{t\in T\backslash\{s_0\}}.
\end{equation}
Now let $H$ be the $L^2$-space of the probability space on which the process $Z$ is defined and consider the set
$
S:= \{ \pm Z(t) \colon t\in T\}\subset \P(H). 
$
Recall from~\eqref{eq:proj_def} that 
$$
p_{\pm Z(s_0)}(\pm Z(t)) = \pm \frac{Z(t)-\Cov(Z(t),Z(s_0))Z(s_0)}{\sqrt{1-\Cov^2(Z(t),Z(s_0))}},
\qquad t\in T\backslash\{s_0\}. 
$$  
In the Hilbert space notation, the equality of the covariances of the processes in~\eqref{eq:cond_covar} implies that
$$
|\langle p_{\pm Z(s_0)}(\pm Z(x)), p_{\pm Z(s_0)}(\pm Z(y)) \rangle| = |\langle Z(x), Z(y)\rangle|
$$
for all $x,y\in T\backslash\{s_0\}$, which means that $S$ satisfies the condition of Definition~\ref{def:main}.  Theorem~\ref{theo:main1} yields a decomposition $T = \cup_{\alpha\in I} T_\alpha$ such that the sets $\{\pm Z(t)\colon t\in T_\alpha\}\subset \P(H)$ are mutually orthogonal, which means that the Gaussian  processes $(Z(t))_{t\in T_\alpha}$ are mutually independent. Moreover, for each $\alpha\in I$, the set $\{\pm Z(t)\colon t\in T_\alpha\}$ is isometric to $h(A)$ for some set $A\subset \R$ or to a four-point configuration from Example~\ref{ex:four_points_projective}. For concreteness, let us consider the former case. The existence of the isometry means that there is a bijection $\psi_\alpha: T_\alpha \to A$ such that
$$
\arccos |\Cov(Z(s), Z(t))| = \arccos \frac 1 {\cosh (\psi_\alpha(t)-\psi_\alpha(s))}, \qquad s,t\in T_\alpha.
$$
Hence, $|\Cov(Z(s), Z(t))| = \Cov(X(\psi_\alpha(s)),X(\psi_\alpha(t)))$. By Lemma~\ref{lem:choose_positive_products}, there is a function $a: T_\alpha \to \{-1,1\}$ such that $\Cov(a(s) Z(s), a(t) Z(t))>0$ for all $s,t\in T_\alpha$, which implies that
$$
\Cov(a(s)Z(s), a(t)Z(t)) = \Cov(X(\psi_\alpha(s)),X(\psi_\alpha(t))).
$$
Hence, the process $(Z(t))_{t\in T_\alpha}$ has the same law as $(a(t)X(\psi_\alpha(t)))_{t\in T_\alpha}$.


\section{Proof of Theorem~\ref{theo:main2}}\label{proof:main_2}

Consider a metric space $(E,d)$  satisfying the triangle equality. It is clear that if $E$ has $\leq 3$ points, then it can be embedded into $\R$ isometrically.

Let the number of points in $E$ be equal to $4$.  Without loss of generality, let the diameter of this space be $1$. Otherwise, we can rescale the distances. Let $0$ and $1$ be the points in $E$ with $d(0,1)=1$ and denote the remaining two points by $X$ and $Y$. Let
$$
d(0,X) = x, \qquad d(1,Y)=y, \qquad d(X,Y) = d.
$$
We have $x<1$, $y<1$ and $d\leq 1$. Indeed, $x=1$, would imply that the triangle $01X$ could satisfy the triangle equality only if $X=1$. Similarly $y=1$ is not possible. With the above notation, from the triangles $01X$ and $01Y$ we have
$$
d(X,1) = 1-x, \qquad d(Y,0) = 1-y.
$$
We consider the triangles $0XY$ and $1XY$. There are $9$ cases.

\vspace*{2mm}
\noindent
\textit{Case 1:} $x+ (1-y) = d$ and $(1-x)+y = d$. It follows that $x=y$ and hence $d=1$. Thus, our metric space is isometric to a space from Example~\ref{example:4_points}.

\vspace*{2mm}
\noindent
\textit{Case 2:} $x+d= 1-y$ and $(1-x)+y =d$. It follows that $y=0$, a contradiction.

\vspace*{2mm}
\noindent
\textit{Case 3:} $x= (1-y) +d$ and $(1-x)+y = d$. It follows that $x=1$ and so $d(X,1) = 1-x = 0$, a contradiction.

\vspace*{2mm}
\noindent
\textit{Case 4:} $x+(1-y)=d$ and $(1-x)+d =y$. It follows that $y=1$, hence $d(Y,0) = 1-y=0$, a contradiction.

\vspace*{2mm}
\noindent
\textit{Case 5:} $x+d=1-y$ and $(1-x)+d = y$. It follows that $d=0$, a contradiction.

\vspace*{2mm}
\noindent
\textit{Case 6:} $x = (1-y) +d$ and $(1-x)+d = y$. Both equations are equivalent to $x+y=d+1$. Then, the map
$$
\varphi(0)=0, \quad \varphi(1)=1, \quad \varphi(X) = x, \quad \varphi(Y) = 1-y
$$
defines an isometric embedding of $E$ into $\R$.

\vspace*{2mm}
\noindent
\textit{Case 7:} $x+(1-y)=d$ and $1-x = d+y$. It follows that $x=0$, a contradiction.

\vspace*{2mm}
\noindent
\textit{Case 8:} $x+d= 1-y$ and $1-x=d+y$. Both conditions are equivalent to $x+y+d=1$. The map
$$
\varphi(0) =0,\quad \varphi(1) = 1,\quad \varphi(X)=x, \quad \varphi(Y)= 1-y
$$
defines an isometric embedding of $E$ into $\R$.

\vspace*{2mm}
\noindent
\textit{Case 9:} $x=(1-y)+d$ and $1 - x = d + y$. It follows that $d=0$, a contradiction.

This completes the proof in the case of $4$ points.

Let now $E$ be a metric space consisting of  exactly $5$ points. Again assume that the diameter is $1$ and that the points are $0,1,X,Y,Z$ with
$$
d(0,1) = 1.
$$
Consider the quadruple $\{0,1,X,Y\}$. It is either ``classical'' (that is, it can be isometrically embedded into $\R$) or it is non-classical (that is, it is isometric to the space in Example~\ref{example:4_points}). Our aim is to show that the latter case cannot occur.

\vspace*{2mm}
\noindent
\textsc{Assumption:} The quadruple $\{0,1,X,Y\}$ is non-classical, namely
$$
d(0,1) = d(X,Y) = 1, \quad d(0,X) = d(1,Y) = x, \quad d(0,Y)= d(1,X) = 1-x.
$$
Now let us look at the quadruple $\{0,1,X,Z\}$ and consider two cases.

\vspace*{2mm}
\noindent
\textsc{Case 1:} The quadruple $\{0,1,X,Z\}$ is classical. We may then identify $X$ and $Z$ with two points $x$ and $z$ in the interval $(0,1)$. (Recall that the diameter of $E$ is $1$). At the moment, we don't know which of the numbers, $x$ or $z$, is larger.     So, we have the points $0,1,x,z$ on the real line and one additional point $Y$ outside with
$$
d(Y,0) = 1-x, \quad d(Y,x) = 1, \quad d(Y,z)=:u, \quad d(Y,1) = x.
$$
The following triangles satisfy the triangle equality:
\begin{itemize}
\item $0YZ$ with side lengths $1-x,u,z$.
\item $XYZ$ with side lengths $1,u, |z-x|$.
\item $YZ1$ with side lengths $u,x, 1-z$.
\end{itemize}
Since the diameter of our space is $1$, we get from the triangle $XYZ$ that
$$
u = 1- |z-x|.
$$

\vspace*{2mm}
\noindent
\textsc{Subcase 1a:} $x<z$. We have two triangles which satisfy the  triangle equality:
\begin{itemize}
\item $0YZ$ with side lengths $1-x,1-z+x,z$.
\item $YZ1$ with side lengths $1-z+x, x, 1-z$.
\end{itemize}
For $YZ1$, the triangle equality is fulfilled, so we are left with $0YZ$. If $1-x = (1-z+x)+z$, then $x=0$ and hence $X=0$, a contradiction. If $1-z+x = (1-x)+z$, then $x=z$ and hence $X=Z$, a contradiction. Finally, if $z= (1-x) + (1-z+x)$, then $z=1$ and hence $Z=1$, a contradiction.

\vspace*{2mm}
\noindent
\textsc{Subcase 1a:} $x>z$.
We have two triangles satisfying the  triangle equality:
\begin{itemize}
\item $0YZ$ with side lengths $1-x,1-x+z,z$.
\item $YZ1$ with lengths $1-x+z, x, 1-z$.
\end{itemize}
In $0YZ$, the triangle equality holds trivially, and we are left with the triangle $YZ1$.
If $1-x+z = x+(1-z)$, then $x=z$, a contradiction.
If $x= (1-x+z) + (1-z)$, then $x=1$, a contradiction. Finally, if $1-z = (1-x+z) + x$, then $z=0$, again a contradiction.

We arrive at the conclusion that case $1$ is not possible.

\vspace*{2mm}
\noindent
\textsc{Case 2:} The quadruple $\{0,1,X,Z\}$ is non-classical, see Example~\ref{example:4_points}. It follows that $d(X,Z)= 1$. Consider now the quadruple $\{0,1, Y,Z\}$. Since, as we argued above, Case 1 leads to a contradiction, this quadruple must be non-classical, too. Hence, $d(Y,Z)=1$. Recalling that $d(X,Y) = 1$, we arrive at the contradiction because the triangle equality does not hold for the triangle $XYZ$.

\vspace*{2mm}
\noindent
So, our assumption was wrong and the quadruple $\{0,1,X,Y\}$ is classical. Similarly, the quadruples $\{0,1,Y,Z\}$ and $\{0,1,Z,X\}$ are classical. Without loss of generality, let $0<d(0,X) < d(0,Y) <d(0,Z)<1$. Consider the map $\varphi:E\to [0,1]$ with
$$
\varphi(0)= 0,
\quad
\varphi(1)= 1,
\quad
\varphi(X) = d(0,X),
\quad
\varphi(Y) = d(0,Y),
\quad
\varphi(Z) = d(0,Z).
$$
We prove that it is an isometry. Since the diameter of $E$ is $1$, we have
$$
d(X,1) = 1 - d(0,X) = |\varphi(X) - \varphi(1)|,
$$
and similarly for $d(Y,1)$ and $d(Z,1)$. To complete the proof that $\varphi$ is an isometry, we have to show that $|\varphi(X) - \varphi(Y)| = d(X,Y)$ (for the pairs $Y,Z$ and $X,Z$ the proof is analogous). But this claim follows from the fact that the quadruple $\{0,1,X,Y\}$ is classical.

\vspace*{2mm}
\noindent
Let finally $E$ have an arbitrary cardinality $\geq 5$. We have shown above that every quadruple in $E$ (and, in fact, every five-point set) admits an isometric embedding into $\R$. We shall now define an isometric embedding $\varphi: E\to\R$. Take arbitrary points $A,B\in E$ with $A\neq B$ and define $\varphi(A)= 0$, $\varphi(B) = d(A,B)$. Take one more point $C\in E$. There is a unique isometry $\varphi_{C}:\{A,B,C\}\to\R$ satisfying the conditions $\varphi_C(A) = 0$, $\varphi_C(B)= d(A,B)$. We now claim that  $\varphi(C):= \varphi_C(C)$ defines an isometric embedding of $E$ into $\R$. Indeed, let $D_1,D_2\in E$ be two points. There is an isometry  $\psi: \{A,B,D_1,D_2\}\to \R$. Moreover, if we impose the conditions $\psi(A)=0$, $\psi(B)=d(A,B)$, it becomes uniquely defined. Since the restrictions of $\psi$ to the triangles $ABD_1$ and $ABD_2$ are isometries, it follows that $\varphi(D_i) = \varphi_{D_i}(D_i)= \psi(D_i)$ for $i=1,2$. But then $d(\varphi(D_1),\varphi(D_2)) = d(\psi(D_1),\psi(D_2)) = d(D_1,D_2)$ because $\psi$ is isometric.

\bibliography{helix_bib}
\bibliographystyle{plainnat}




\end{document}